\newcommand{\D}{\mathcal{D}}              
\newcommand{\K}{\mathbb{K}}               
\newcommand{\C}{\mathbb{C}}               
\newcommand{\R}{\mathbb{R}}               
\newcommand{\N}{\mathbb{N}}                
\renewcommand{\S}{\mathcal{S}}              
\renewcommand{\Re}{\mathrm{Re}\,}          
\renewcommand{\Im}{\mathrm{Im}\,}          
\renewcommand{\L}{\mathcal{L}}             
\newcommand{\A}{\mathcal{A}}             
\newcommand{\begriff}[1]{\textbf{#1}}
  \def\section{\@startsection{section}{1}%
    \z@{.7\linespacing\@plus\linespacing}{.5\linespacing}%
    {\normalfont\LARGE\bfseries}}
\newcommand{\sect}
{
  \setcounter{equation}{0}
  \setcounter{figure}{0}
  \section
}
\newcommand{\enum}[1]{\textnormal{(\textbf{#1})}}
\theoremstyle{plain}
\newtheorem{definition}{Definition}[section]
\newtheorem{theorem}[definition]{Theorem}
\newtheorem{assumption}[definition]{Assumption}
\theoremstyle{definition}
\begin{document}
\title[A new $L^p$-Antieigenvalue Condition for\\Ornstein-Uhlenbeck Operators]{A new $L^p$-Antieigenvalue Condition for\\Ornstein-Uhlenbeck Operators}
\vspace*{0.75cm}
\begin{center}
\normalfont\huge\bfseries{\shorttitle}\\
\vspace*{0.25cm}
\end{center}

\vspace*{0.5cm}
\noindent
\hspace*{4.2cm}
\textbf{Denny Otten}$\footnote[1]{e-mail: \textcolor{blue}{dotten@math.uni-bielefeld.de}, phone: \textcolor{blue}{+49 (0)521 106 4784}, \\
                                 fax: \textcolor{blue}{+49 (0)521 106 6498}, homepage: \url{http://www.math.uni-bielefeld.de/~dotten/}, \\
                                 supported by CRC 701 'Spectral Structures and Topological Methods in Mathematics'.}$ \\
\hspace*{4.2cm}
Department of Mathematics \\
\hspace*{4.2cm}
Bielefeld University \\
\hspace*{4.2cm}
33501 Bielefeld \\
\hspace*{4.2cm}
Germany

\vspace*{0.8cm}
\noindent
\hspace*{4.2cm}
Date: \today
\normalparindent=12pt

\vspace{0.4cm}
\noindent
\begin{center}
\begin{minipage}{0.9\textwidth}
  {\small
  \textbf{Abstract.} 
  In this paper we study perturbed Ornstein-Uhlenbeck operators
  \begin{align*}
    \left[\mathcal{L}_{\infty} v\right](x)=A\triangle v(x) + \left\langle Sx,\nabla v(x)\right\rangle-B v(x),\,x\in\mathbb{R}^d,\,d\geqslant 2,
  \end{align*}
  for simultaneously diagonalizable matrices $A,B\in\mathbb{C}^{N,N}$. The unbounded drift term is defined by a skew-symmetric matrix
  $S\in\mathbb{R}^{d,d}$. Differential operators of this form appear when investigating rotating waves in time-dependent reaction diffusion systems. 
  As shown in a companion paper, one key assumption to prove resolvent estimates of $\mathcal{L}_{\infty}$ in $L^p(\R^d,\C^N)$, $1<p<\infty$, is 
  the following $L^p$-dissipativity condition
  \begin{align*}
    |z|^2\Re\left\langle w,Aw\right\rangle + (p-2)\Re\left\langle w,z\right\rangle\Re\left\langle z,Aw\right\rangle\geqslant\gamma_A |z|^2|w|^2\;\forall\,z,w\in\C^N
  \end{align*}
  for some $\gamma_A>0$. We prove that the $L^p$-dissipativity condition is equivalent to a new $L^p$-antieigenvalue condition
  \begin{align*}
    A\text{ invertible}\quad\text{and}\quad\mu_1(A)>\frac{|p-2|}{p},\,1<p<\infty,\,\mu_1(A)\text{ first antieigenvalue of $A$,}
  \end{align*}
  which is a lower $p$-dependent bound of the first antieigenvalue of the diffusion matrix $A$. This relation provides a complete algebraic characterization 
  and a geometric meaning of $L^p$-dissipativity for complex-valued Ornstein-Uhlenbeck operators in terms of the antieigenvalues of $A$. 
  The proof is based on the method of Lagrange multipliers. We also discuss several special cases in which the first antieigenvalue can be given explicitly.
  }
\end{minipage}
\end{center}

\noindent
\textbf{Key words.} Complex-valued Ornstein-Uhlenbeck operator, $L^p$-dissipativity, $L^p$-antieigenvalue condition, applications to rotating waves.

\noindent
\textbf{AMS subject classification.} 35J47 (47B44, 47D06, 35A02, 47A10).


%
%
\sect{Introduction}
\label{sec:Introduction}

In this paper we study $L^p$-dissipativity of differential operators of the form
\begin{align*}
  \left[\L_{\infty}v\right](x) := A\triangle v(x) + \left\langle Sx,\nabla v(x)\right\rangle - Bv(x),\,x\in\R^d,\,d\geqslant 2,
\end{align*}
for simultaneously diagonalizable matrices $A,B\in\C^{N,N}$ with $\Re\sigma(A)>0$ and a skew-symmetric matrix $S\in\R^{d,d}$.

Introducing the complex Ornstein-Uhlenbeck operator, \cite{UhlenbeckOrnstein1930},
\begin{align*}
  \left[\L_0 v\right](x) := A\triangle v(x) + \left\langle Sx,\nabla v(x)\right\rangle,\,x\in\R^d,
\end{align*}
with \begriff{diffusion term} and \begriff{drift term} given by
\begin{align*}
  A\triangle v(x):=A\sum_{i=1}^{d}\frac{\partial^2}{\partial x_i^2}v(x)\quad\text{and}\quad\left\langle Sx,\nabla v(x)\right\rangle:=\sum_{i=1}^{d}(Sx)_i\frac{\partial}{\partial x_i}v(x),
\end{align*}
we observe that the operator $\L_{\infty}=\L_0-B$ is a constant coefficient perturbation of $\L_0$. Our interest is in skew-symmetric matrices $S=-S^T$, 
in which case the drift term is rotational containing angular derivatives
\begin{align*}
  \left\langle Sx,\nabla v(x)\right\rangle=\sum_{i=1}^{d-1}\sum_{j=i+1}^{d}S_{ij}\left(x_j\frac{\partial}{\partial x_i}-x_i\frac{\partial}{\partial x_j}\right)v(x).
\end{align*}
Such differential operators arise when investigating exponential decay of rotating waves in reaction diffusion systems, see \cite{BeynLorenz2008,Otten2014}. 
The operator $\L_{\infty}$ appears as a far-field linearization at the solution of the nonlinear problem $\L_0 v=f(v)$. The results of this paper are 
crucial for dealing with the nonlinear case, see \cite{Otten2014}. 

An essential ingredient to treat the nonlinear case is to prove $L^p$-resolvent estimates for $\L_{\infty}$, \cite[Theorem 4.4]{Otten2015a}. Such estimates are used to 
solve the identification problem for $\L_{\infty}$, \cite[Theorem 5.1]{Otten2015a}. One key assumption to prove resolvent estimates of $\mathcal{L}_{\infty}$ in $L^p(\R^d,\C^N)$, 
$1<p<\infty$, is the following $L^p$-dissipativity condition
\begin{align*}
  |z|^2\Re\left\langle w,Aw\right\rangle + (p-2)\Re\left\langle w,z\right\rangle\Re\left\langle z,Aw\right\rangle\geqslant\gamma_A |z|^2|w|^2\;\forall\,z,w\in\C^N
\end{align*}
for some $\gamma_A>0$. In general, it is not easy to characterize the class of matrices $A$ which satisfy this algebraic condition. Only in a few special cases, 
e.g. in the scalar case $N=1$ or for general $N$ and $p=2$, one can check the validity directly.

The aim of this paper is to prove that the $L^p$-dissipativity condition is equivalent to a new $L^p$-antieigenvalue condition, namely
\begin{align*}
  A\text{ invertible}\quad\text{and}\quad\mu_1(A)>\frac{|p-2|}{p},\,1<p<\infty,\,\mu_1(A)\text{ first antieigenvalue of $A$,}
\end{align*}
which is a lower $p$-dependent bound of the first antieigenvalue of the diffusion matrix $A$. This criterion implies an upper $p$-dependent bound for the maximal 
(real) angle of $A$
\begin{align*}
  \Phi_{\R}(A):=\cos^{-1}\left(\mu_1(A)\right)<\cos^{-1}\left(\frac{|p-2|}{p}\right)\in]0,\frac{\pi}{2}],\quad 1<p<\infty.
\end{align*}
The relation between the $L^p$-dissipativity and the $L^p$-antieigenvalue condition seems to be new in the literature and is proved in Theorem 
\ref{thm:LpDissipativityCondition}. It provides a complete algebraic characterization and a nice geometric meaning of $L^p$-dissipativity for 
complex-valued Ornstein-Uhlenbeck operators in terms of the antieigenvalues of $A$. The proof is based on the method of Lagrange multipliers 
and requires to destinguish between the cases $A\in\R^{N,N}$ and $A\in\C^{N,N}$. We also discuss several special cases in which the first antieigenvalue 
can be given explicitly.



$L^p$-dissipativity of second order differential operators in the scalar but more general case has been analyzed by Cialdea and Maz'ya in \cite{Cialdea2010, CialdeaMazya2009, 
Cialdea2009, CialdeaMazya2005}. General theory of antieigenvalues has been developed by Gustafson in \cite{Gustafson1968,Gustafson1972} and independently by Kre{\u\i}n in 
\cite{Krein1969}. Explicit representations of antieigenvalues have been established for Hermitian positive definite operators by Mirman in \cite{Mirman1983} and by Horn and Johnson in 
\cite{HornJohnson2013}, and for (strictly) accretive normal operators by Seddighin and Gustafson in \cite{GustafsonSeddighin2005,Seddighin2005,GustafsonSeddighin1993,GustafsonSeddighin1989},
by Davis in \cite{Davis1980} and by Mirman in \cite{Mirman1983}. Approximation results and the computation of antieigenvalues have been analyzed by Seddighin in 
\cite{Seddighin2012} and \cite{Seddighin2005a, Seddighin2003}, respectively. For general theory of antieigenvalues and its application to operator theory, numerical analysis, 
wavelets, statistics, quantum mechanics, finance and optimization we refer to the book by Gustafson, \cite{Gustafson2012}. Further applications are treated in 
\cite{Gustafson1968,Gustafson1968b,GustafsonRao1997,Gustafson1997}. There are some extensions of the antieigenvalue theory to higher antieigenvalues, see \cite{GustafsonSeddighin1993, 
GustafsonSeddighin2005}, to joint antieigenvalues, see \cite{Seddighin2005}, to symmetric antieigenvalues, see \cite{PaulDas2010}, and to $\theta$-antieigenvalues, see 
\cite{PaulDasDebnath2013}. Historical background material can be found in \cite{Gustafson2012, Gustafson1994}. The method of Lagrange multipliers, that is necessary 
to prove our main result, is also used in \cite{GustafsonSeddighin1989}. The $L^p$-dissipativity condition can be found in \cite{Otten2014,Otten2015a} and is used 
to prove resolvent estimates for complex Ornstein-Uhlenbeck systems.

The results from Section \ref{sec:TheLpAntieigenvalueCondition} and \ref{sec:SomeExplicitRepresentationsOfTheFirstAntieigenvalue} are directly based on the 
PhD thesis \cite{Otten2014}.

%
%
\sect{Assumptions and outline of results}
\label{sec:Preliminaries}

Consider the differential operator 
\begin{align*}
  \left[\L_{\infty}v\right](x) := A\triangle v(x) + \left\langle Sx,\nabla v(x)\right\rangle - Bv(x),\,x\in\R^d,\,d\geqslant 2,
\end{align*}
for some matrices $A,B\in\C^{N,N}$ and $S\in\R^{d,d}$.

The following conditions will be needed in this paper and relations among them will be discussed below.
\begin{assumption} 
  \label{ass:Assumption1}
  Let $A,B\in\K^{N,N}$ with $\K\in\{\R,\C\}$ and $S\in\R^{d,d}$ be such that
  \begin{flalign}
    &\text{$A$ and $B$ are simultaneously diagonalizable (over $\C$)},         \tag{A1}\label{cond:A8B} &\\
    &\Re\sigma(A)>0, \tag{A2}\label{cond:A2} &\\
    &\text{There exists some $\beta_A>0$ such that} \tag{A3}\label{cond:A3} &\\
    &\quad\Re\left\langle w,Aw\right\rangle\geqslant\beta_A\;\forall\,w\in\K^N,\,|w|=1, \nonumber &\\
    &\text{There exists some $\gamma_A>0$ such that} \tag{A4}\label{cond:A4DC} &\\
    &\quad|z|^2\Re\left\langle w,Aw\right\rangle + (p-2)\Re\left\langle w,z\right\rangle\Re\left\langle z,Aw\right\rangle\geqslant\gamma_A |z|^2|w|^2\;\forall\,z,w\in\K^N \nonumber &\\
    &\text{for some $1<p<\infty$,} \nonumber &\\
    &\text{Case ($N=1$, $\K=\R$):} \tag{A5}\label{cond:A4} \\
    &\quad A=a>0, \nonumber &\\
    &\text{Cases ($N\geqslant 2$, $\K=\R$) and ($N\geqslant 1$, $\K=\C$):} \nonumber &\\
    &\quad A\text{ invertible}\quad\text{and}\quad\mu_1(A)>\frac{|p-2|}{p}\text{ for some $1<p<\infty$,} \nonumber &\\
    &\text{$S$ is skew-symmetric}.        \tag{A6}\label{cond:A5} &
  \end{flalign}
\end{assumption}

Assumption \eqref{cond:A8B} is a \textbf{system condition} and ensures that some results for scalar equations can be extended to system cases. This condition was used 
in \cite{Otten2014,Otten2014a} to derive an explicit formula for the heat kernel of $\L_{\infty}$. It is motivated by the fact that a transformation of a scalar 
complex-valued equation into a $2$-dimensional real-valued system always implies two (real) matrices $A$ and $B$ that are simultaneously diagonalizable (over $\C$).
The \textbf{positivity condition} \eqref{cond:A2} guarantees that the diffusion part $A\triangle$ is an elliptic operator. It requires that all eigenvalues $\lambda$ of $A$ 
are contained in the open right half-plane $\C_+:=\{\lambda\in\C\mid\Re\lambda>0\}$, where $\sigma(A)$ denotes the spectrum of $A$. Condition \eqref{cond:A2} guarantees 
that $A^{-1}$ exists and states that $-A$ is a stable matrix. To discuss the \textbf{strict accretivity condition} \eqref{cond:A3}, we recall the following definition, 
from \cite{Gustafson1968,Gustafson2012}.

\begin{definition}\label{def:AccretiveDissipativeMatrix}
  Let $A\in\K^{N,N}$ with $\K\in\{\R,\C\}$ and $N\in\N$, then $A$ is called
  \begriff{accretive} (or \begriff{strictly accretive}), if
  \begin{align*}
    \underset{\substack{w\in\K^N\\|w|=1}}{\inf}\Re\left\langle w,Aw\right\rangle\geqslant 0
    \quad(\text{or}\quad\underset{\substack{w\in\K^N\\|w|=1}}{\inf}\Re\left\langle w,Aw\right\rangle > 0),
  \end{align*}
  and \begriff{dissipative} (or \begriff{strictly dissipative}), if
  \begin{align*}
    \underset{\substack{w\in\K^N\\|w|=1}}{\sup}\Re\left\langle w,Aw\right\rangle\leqslant 0 
    \quad(\text{or}\quad\underset{\substack{w\in\K^N\\|w|=1}}{\sup}\Re\left\langle w,Aw\right\rangle < 0 ).
  \end{align*}
  For Hermitian matrices $A$, replace accretive (strictly accretive, dissipative and strictly dissipative) by \begriff{positive semi-definite} (\begriff{positive definite}, 
  \begriff{negative semi-definite} and \begriff{negative definite}).
\end{definition}

Condition \eqref{cond:A3} states that the matrix $A$ is strictly accretive, which is more restrictive than \eqref{cond:A2}. In \eqref{cond:A3}, 
$\left\langle u,v\right\rangle:=\overline{u}^T v$ denotes the standard inner product on $\K^N$. Note that condition \eqref{cond:A2} is satisfied 
if and only if
\begin{align*}
  \exists\,\left[\cdot,\cdot\right]\text{ inner product on $\K^N$}:\quad\Re\left[w,Aw\right]\geqslant\beta_A>0\;\forall\,w\in\K^N,\,\left[w,w\right]=1,
\end{align*}
but it does not imply $\left[\cdot,\cdot\right]=\left\langle\cdot,\cdot\right\rangle$. Condition \eqref{cond:A3} ensures that the differential operator $\L_{\infty}$ 
is closed on its (local) domain $\D^p_{\mathrm{loc}}(\L_0)$. The \textbf{$L^p$-dissipativity condition} \eqref{cond:A4DC} seems to be new in the literature and is used 
to prove $L^p$-resolvent estimates for $\L_{\infty}$ in \cite{Otten2014,Otten2015a}. Condition \eqref{cond:A4DC} is more restrictive than \eqref{cond:A3} and imposes 
additional requirements on the spectrum of $A$. $L^p$-dissipativity results for scalar differential operators of the form
\begin{align*}
  \L v = \nabla^T\left(Q\nabla v\right) + b^T\nabla v + av,\,x\in\Omega
\end{align*}
has been established in \cite{CialdeaMazya2005} for constant coefficients $Q\in\C^{d,d}$, $b\in\C^d$, $a\in\C$ with $\Omega\subseteq\R^d$ open, 
and for variable coefficients $Q_{ij},b_j\in C^1(\overline{\Omega},\C)$, $a\in C^0(\overline{\Omega},\C)$, $i,j=1,\ldots d$, with $\Omega\subset\R^d$ bounded. 
In the scalar complex case with $A=\alpha\in\C$ and $B=\delta\in\C$, the choice
\begin{align*}
  Q=\alpha I_d,\;b=Sx,\;a=\delta,\;\Omega=\R^d
\end{align*}
implies $\L_{\infty}=\L$ and leads to a differential operator with variable coefficients but on an unbounded domain. Thus, the $L^p$-dissipativity 
of $\L_{\infty}$ has not been treated in \cite{CialdeaMazya2005}, neither for the system nor for the scalar case. Therefore, the $L^p$-dissipativity 
condition \eqref{cond:A4DC}, which has been established in \cite{Otten2014,Otten2015a}, can not be deduced from \cite{CialdeaMazya2005}. 
Recall the following definition from \cite{Gustafson1968,Gustafson2012}.

\begin{definition}\label{def:AntieigenvalueAntieigenvectorAngle}
  Let $A\in\K^{N,N}$ with $\K\in\{\R,\C\}$ and $N\in\N$. Then we define by
  \begin{align}
    \label{equ:FirstAntieigenvalue}
    \mu_1(A):=\inf_{\substack{w\in\K^N\\w\neq 0\\Aw\neq 0}}\frac{\Re\left\langle w,Aw\right\rangle}{|w||Aw|}
             =\inf_{\substack{w\in\K^N\\|w|=1\\Aw\neq 0}}\frac{\Re\left\langle w,Aw\right\rangle}{|Aw|}
  \end{align}
  the \begriff{first antieigenvalue of $A$}. A vector $0\neq w\in\K^N$ with $Aw\neq 0$ for which the infimum is attained, 
  is called an \begriff{antieigenvector of $A$}. Moreover, we define the \begriff{(real) angle of $A$} by
  \begin{align*}
    \Phi_{\R}(A):=\cos^{-1}\left(\mu_1(A)\right).
  \end{align*}
\end{definition}

The expression for $\mu_1(A)$ is also sometimes denoted by $\cos A$ (and by $\cos(\Phi_{\R}(A))$) and is called the \begriff{cosine of $A$}. 
It was introduced simultaneously by Gustafson in \cite{Gustafson1968} and by Kre{\u\i}n in \cite{Krein1969}, where the expression for 
$\mu_1(A)$ is denoted by $\mathrm{dev}\, A$ and is called the \begriff{deviation of $A$}. Note that the definition of the first antieigenvalue 
is not consistent in the literature, in the sense that sometimes the matrix $A$ is additionally assumed to be accretive or strictly accretive. 
Let us briefly motivate the geometric idea behind eigenvalues and antieigenvalues: Eigenvectors are those vectors that are stretched (or dilated) 
by a matrix (without any rotation). Their corresponding eigenvalues are the factors by which they are stretched. The eigenvalues may be 
ordered as a spectrum from smallest to largest eigenvalue. Antieigenvectors are those vectors that are rotated (or turned) by a matrix (without 
any stretching). Their corresponding antieigenvalues are the cosines of their associated turning angle. The antieigenvalues may be orderd 
from largest to smallest turning angle. Therefore, the first antieigenvalue $\mu_1(A)$ can be considered as the cosine of the maximal turning angle 
of the matrix $A$.
The \textbf{$L^p$-antieigenvalue condition} \eqref{cond:A4} postulates that $\mu_1(A)$ is bounded from below by a non-negative $p$-dependent constant. 
This is equivalent to the following $p$-dependent upper bound for the (real) angle of $A$,
\begin{align*}
  \Phi_{\R}(A):=\cos^{-1}\left(\mu_1(A)\right)<\cos^{-1}\left(\frac{|p-2|}{p}\right)\in]0,\frac{\pi}{2}],\quad 1<p<\infty.
\end{align*}
In the scalar complex case $A=\alpha\in\C$, assumption \eqref{cond:A4} leads to a cone condition which requires $\alpha$ to lie in a $p$-dependent sector 
in the right half-plane, see Section \ref{sec:4.2}. The cone condition coincides with the $L^p$-dissipativity condition from \cite[Theorem 2]{CialdeaMazya2005} 
for differential operators with constant coefficients on unbounded domains and with the $L^p$-quasi-dissipativity condition from \cite[Theorem 4]{CialdeaMazya2005} 
for differential operators with variable coefficients on bounded domains. Our main result in Theorem \ref{thm:LpDissipativityCondition} shows that assumptions 
\eqref{cond:A4DC} and \eqref{cond:A4} are equivalent. Therefore, \eqref{cond:A4} can be considered as a more intuitive description of assumption \eqref{cond:A4DC}. 
For some classes of matrices, the constant $\mu_1(A)$ can be given explicitly in terms of the eigenvalues of $A$, which facilitates to check condition \eqref{cond:A4DC}. 
We summarize the following relation of assumptions \eqref{cond:A2}--\eqref{cond:A4}:
\begin{align*}
  A\text{ invertible}\Longleftarrow\text{\eqref{cond:A2}}\Longleftarrow\text{\eqref{cond:A3}}\Longleftarrow\text{\eqref{cond:A4DC}}\Longleftrightarrow\text{\eqref{cond:A4}}.
\end{align*}
The \textbf{rotational condition} \eqref{cond:A5} implies that the drift term contains only angular derivatives, which is crucial for use our results from \cite{Otten2014a}.

Moreover, let $\beta_B\in\R$ be such that
\begin{align}
  \label{equ:betaB}
  \Re\left\langle w,Bw\right\rangle\geqslant -\beta_B\;\forall\,w\in\K^N,\,|w|=1.
\end{align}
If $\beta_B\leqslant 0$, \eqref{equ:betaB} can be considered as a \textbf{dissipativity condition} for $-B$, compare Definition \ref{def:AccretiveDissipativeMatrix}.

We introduce Lebesgue and Sobolev spaces via
\begin{align*}
  L^p(\R^d,\K^N) :=& \left\{v\in L^1_{\mathrm{loc}}(\R^d,\K^N)\mid \left\|v\right\|_{L^p}<\infty\right\}, \\
  W^{k,p}(\R^d,\K^N) :=& \left\{v\in L^p(\R^d,\K^N)\mid D^{\beta}v\in L^p(\R^d,\K^N)\;\forall\,|\beta|\leqslant k\right\},
\end{align*}
with norms
\begin{align*}
  \left\|v\right\|_{L^p(\R^d,\K^N)} := \bigg(\int_{\R^d}\left|v(x)\right|^p dx\bigg)^{\frac{1}{p}},\quad
  \left\|v\right\|_{W^{k,p}(\R^d,\K^N)} := \bigg(\sum_{0\leqslant|\beta|\leqslant k}\left\|D^{\beta}v\right\|_{L^p(\R^d,\K^N)}^p\bigg)^{\frac{1}{p}},
\end{align*}
for every $1\leqslant p<\infty$, $k\in\N_0$ and multiindex $\beta\in\N_0^d$.

Before we give a detailed outline we briefly review and collect some results from \cite{Otten2014,Otten2014a,Otten2015a} to motivate the origin of 
the $L^p$-dissipativity condition for $\L_{\infty}$.

Assuming \eqref{cond:A8B}, \eqref{cond:A2} and \eqref{cond:A5} for $\K=\C$ it is shown in \cite[Theorem 4.2-4.4]{Otten2014}, \cite[Theorem 3.1]{Otten2014a} 
that the function $H:\R^d\times\R^d\times]0,\infty[\rightarrow\C^{N,N}$ defined by
\begin{align}
  \label{equ:HeatKernel}
  H(x,\xi,t)=(4\pi t A)^{-\frac{d}{2}}\exp\left(-Bt-(4tA)^{-1}\left|e^{tS}x-\xi\right|^2\right)
\end{align}
is a heat kernel of the perturbed Ornstein-Uhlenbeck operator
\begin{align}
  \label{equ:Linfty2}
  \left[\L_{\infty}v\right](x):=A\triangle v(x)+\left\langle Sx,\nabla v(x)\right\rangle-Bv(x).
\end{align}
Under the same assumptions it is proved in \cite[Theorem 5.3]{Otten2014a} that the family of mappings $T(t):L^p(\R^d,\C^N)\rightarrow L^p(\R^d,\C^N)$, $t\geqslant 0$, 
defined by
\begin{align}
  \left[T(t)v\right](x):= \begin{cases}
                              \int_{\R^d}H(x,\xi,t)v(\xi)d\xi &\text{, }t>0 \\
                              v(x) &\text{, }t=0
                            \end{cases}\quad ,x\in\R^d,
  \label{equ:OrnsteinUhlenbeckSemigroupLp}
\end{align}
generates a strongly continuous semigroup on $L^p(\R^d,\C^N)$ for each $1\leqslant p<\infty$. The semigroup $\left(T(t)\right)_{t\geqslant 0}$ 
is called the Ornstein-Uhlenbeck semigroup if $B=0$. The strong continuity of the semigroup justifies to introduce the infinitesimal generator 
$A_p:L^p(\R^d,\C^N)\supseteq\D(A_p)\rightarrow L^p(\R^d,\C^N)$ of $\left(T(t)\right)_{t\geqslant 0}$, short $\left(A_p,\D(A_p)\right)$, via
\begin{align*}
  A_p v := \lim_{t\downarrow 0}\frac{T(t)v-v}{t},\; 1\leqslant p<\infty
\end{align*}
for every $v\in\D(A_p)$, where the domain (or maximal domain) of $A_p$ is given by
\begin{align*}
  \D(A_p):=&\left\{v\in L^p(\R^d,\C^N)\mid \lim_{t\downarrow 0}\frac{T(t)v-v}{t}\text{ exists in $L^p(\R^d,\C^N)$}\right\} \\
          =&\left\{v\in L^p(\R^d,\C^N)\mid A_p v\in L^p(\R^d,\C^N)\right\}.
\end{align*}
An application of abstract semigroup theory yields the unique solvability of the resolvent equation 
\begin{align*}
  \left(\lambda I-A_p\right)v = g,\quad g\in L^p(\R^d,\C^N),\;\lambda\in\C,\;\lambda>-\max_{\lambda\in\sigma(-B)}\Re\lambda
\end{align*}
in $L^p(\R^d,\C^N)$ for $1\leqslant p<\infty$, \cite[Corollary 5.5]{Otten2014a}, \cite[Corollary 6.7]{Otten2014}. But so far, we neither have any 
explicit representation for the maximal domain $\D(A_p)$ nor do we know anything about the relation between the generator $A_p$ and the differential 
operator $\L_{\infty}$. For this purpose, one has to solve the identification problem, which has been done in \cite{Otten2015a}. 
Assuming \eqref{cond:A8B}, \eqref{cond:A2} and \eqref{cond:A5} for $\K=\C$, it is proved in \cite[Theorem 3.2]{Otten2015a} that the Schwartz space 
$\S(\R^d,\C^N)$ is a core of the infinitesimal generator $\left(A_p,\D(A_p)\right)$ for any $1\leqslant p<\infty$. Next, one considers the operator 
$\L_{\infty}:L^p(\R^d,\C^N)\supseteq\D^p_{\mathrm{loc}}(\L_0)\rightarrow L^p(\R^d,\C^N)$ on its domain 
\begin{align*}
  \D^p_{\mathrm{loc}}(\L_0):=\left\{v\in W^{2,p}_{\mathrm{loc}}(\R^d,\C^N)\cap L^p(\R^d,\C^N)\mid A\triangle v+\left\langle S\cdot,\nabla v\right\rangle\in L^p(\R^d,\C^N)\right\}.
\end{align*} 
Under the assumption \eqref{cond:A3} for $\K=\C$, it is shown in \cite[Lemma 4.1]{Otten2015a} that $\left(\L_{\infty},\D^p_{\mathrm{loc}}(\L_0)\right)$ 
is a closed operator in $L^p(\R^d,\C^N)$ for any $1<p<\infty$, which justifies to introduce and analyze its resolvent. The $L^p$-dissipativity 
condition \eqref{cond:A4DC} is the key assumption which allows an energy estimate with respect to the $L^p$-norm and leads to the following result, 
see \cite[Theorem 4.4]{Otten2015a}.

\begin{theorem}[Resolvent Estimates for $\L_{\infty}$ in $L^p(\R^d,\C^N)$ with $1<p<\infty$]\label{thm:UniquenessInDpmax}
  Let the assumptions \eqref{cond:A4DC} and \eqref{cond:A5} be satisfied for $1<p<\infty$ and $\K=\C$. Moreover, let $\lambda\in\C$ with 
  $\Re\lambda>\beta_B$, where $\beta_B\in\R$ is from \eqref{equ:betaB}, and let $v_{\star}\in\D^p_{\mathrm{loc}}(\L_0)$ denote a solution of
  \begin{align*}
    \left(\lambda I-\L_{\infty}\right)v=g
  \end{align*}
  in $L^p(\R^d,\C^N)$ for some $g\in L^p(\R^d,\C^N)$. Then $v_{\star}$ is the unique solution in $\D^p_{\mathrm{loc}}(\L_0)$ and satisfies the resolvent estimate
  \begin{align*}
    \left\|v_{\star}\right\|_{L^p(\R^d,\C^N)}\leqslant\frac{1}{\Re\lambda-\beta_B}\left\|g\right\|_{L^p(\R^d,\C^N)}.
  \end{align*}
  In addition, for $1<p\leqslant 2$ the following gradient estimate holds
  \begin{align*}
    \left|v_{\star}\right|_{W^{1,p}(\R^d,\C^N)}\leqslant \frac{d^{\frac{1}{p}}\gamma_A^{-\frac{1}{2}}}{\left(\Re\lambda-\beta_B\right)^{\frac{1}{2}}}\left\|g\right\|_{L^p(\R^d,\C^N)}.
  \end{align*} 
\end{theorem}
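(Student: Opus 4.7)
The plan is to test the resolvent equation $(\lambda I-\L_{\infty})v_{\star}=g$ against the natural $L^{p'}$-dual $|v_{\star}|^{p-2}v_{\star}$, compute the real part termwise, and extract an energy inequality from the $L^p$-dissipativity condition \eqref{cond:A4DC}. Formally I would write
\begin{align*}
  \Re\int_{\R^d}\langle|v_{\star}|^{p-2}v_{\star},g\rangle\,dx
  = \Re\lambda\,\|v_{\star}\|_{L^p}^{p}
  -\Re\!\int\langle|v_{\star}|^{p-2}v_{\star},A\triangle v_{\star}\rangle\,dx
  -\Re\!\int\langle|v_{\star}|^{p-2}v_{\star},\langle Sx,\nabla v_{\star}\rangle\rangle\,dx
  +\Re\!\int|v_{\star}|^{p-2}\langle v_{\star},Bv_{\star}\rangle\,dx,
\end{align*}
and then treat each of the three non-trivial integrals in turn.

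For the \textbf{diffusion term} I would integrate by parts and use the pointwise identity $\partial_i(|v|^{p-2}v)=|v|^{p-2}\partial_i v+(p-2)|v|^{p-4}\Re\langle v,\partial_i v\rangle v$ to rewrite the integrand at each $x$ (with $z=v_{\star}(x),\ w=\partial_i v_{\star}(x)$) as $|z|^{p-2}\Re\langle w,Aw\rangle+(p-2)|z|^{p-4}\Re\langle w,z\rangle\Re\langle z,Aw\rangle$. Factoring out $|z|^{p-4}$ and invoking \eqref{cond:A4DC} with $z,w$ yields the key pointwise lower bound $\gamma_A|z|^{p-2}|w|^2$. Summing over $i$ gives $-\Re\int\langle|v_{\star}|^{p-2}v_{\star},A\triangle v_{\star}\rangle\,dx\geqslant\gamma_A\int|v_{\star}|^{p-2}|\nabla v_{\star}|^2\,dx$. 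For the \textbf{drift term} I would use the skew-symmetry \eqref{cond:A5}, which implies $\trace(S)=0$ and hence $\int\langle Sx,\nabla|v_{\star}|^p\rangle\,dx=0$; together with $\partial_i|v|^p=p|v|^{p-2}\Re\langle v,\partial_i v\rangle$ this shows the drift contribution vanishes. The \textbf{$B$-term} is controlled by \eqref{equ:betaB}: $-\Re\langle v_{\star},Bv_{\star}\rangle\leqslant\beta_B|v_{\star}|^2$.

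Combining the three estimates and bounding the left-hand side by Hölder, $|\Re\int\langle|v_{\star}|^{p-2}v_{\star},g\rangle\,dx|\leqslant\|v_{\star}\|_{L^p}^{p-1}\|g\|_{L^p}$, delivers the master inequality
\begin{align*}
  (\Re\lambda-\beta_B)\|v_{\star}\|_{L^p}^{p}+\gamma_A\int|v_{\star}|^{p-2}|\nabla v_{\star}|^2\,dx\leqslant\|v_{\star}\|_{L^p}^{p-1}\|g\|_{L^p}.
\end{align*}
Discarding the gradient term and dividing by $\|v_{\star}\|_{L^p}^{p-1}$ yields the resolvent estimate; the same inequality with $g=0$ forces $v_{\star}=0$, giving uniqueness. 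For the gradient estimate when $1<p\leqslant 2$, I would apply Hölder with conjugate exponents $\tfrac{2}{p},\tfrac{2}{2-p}$ to obtain $\int|\nabla v_{\star}|^p\,dx\leqslant(\int|v_{\star}|^{p-2}|\nabla v_{\star}|^2\,dx)^{p/2}\|v_{\star}\|_{L^p}^{p(2-p)/2}$, substitute the previous bound $\int|v_{\star}|^{p-2}|\nabla v_{\star}|^2\,dx\leqslant\gamma_A^{-1}\|v_{\star}\|_{L^p}^{p-1}\|g\|_{L^p}$ together with the resolvent estimate to conclude $\|\nabla v_{\star}\|_{L^p}\leqslant\gamma_A^{-1/2}(\Re\lambda-\beta_B)^{-1/2}\|g\|_{L^p}$, and finally use $|\partial_i v_{\star}|\leqslant|\nabla v_{\star}|$ componentwise to pick up the factor $d^{1/p}$.

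The main obstacle I expect is rigorously justifying the test-function computation: the function $|v_{\star}|^{p-2}v_{\star}$ is singular at the zero set of $v_{\star}$ when $p<2$, and $v_{\star}\in\D^p_{\mathrm{loc}}(\L_0)$ is only locally $W^{2,p}$, so integration by parts over all of $\R^d$ is not automatic. The standard remedy is to regularize both the amplitude (replace $|v_{\star}|^{p-2}$ by $(|v_{\star}|^2+\varepsilon)^{(p-2)/2}$) and the spatial domain (multiply by a smooth cutoff $\chi_R$), perform the integration by parts on these approximations where all derivatives are honest, and then pass $\varepsilon\downarrow 0$ and $R\to\infty$ using dominated convergence; the skew-symmetry of $S$ is precisely what makes the cutoff error from the drift term $\int\chi_R'\cdot Sx\cdot(\cdots)$ controllable. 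Once the pointwise \eqref{cond:A4DC}-argument is inserted into this regularization scheme, the algebra above closes without further difficulty.
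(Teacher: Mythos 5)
Your energy-method proof is correct in substance and is essentially the argument the author intends: note that this paper does not actually prove Theorem \ref{thm:UniquenessInDpmax} but imports it from the companion paper \cite[Theorem 4.4]{Otten2015a}, describing it precisely as ``an energy estimate with respect to the $L^p$-norm'' driven by \eqref{cond:A4DC}, which is exactly your pairing with $|v_\star|^{p-2}v_\star$, the pointwise application of \eqref{cond:A4DC} with $z=v_\star(x)$, $w=\partial_i v_\star(x)$, and the vanishing of the drift contribution via $\trace(S)=0$ (and $\langle Sx,x\rangle=0$ for the radial cutoff errors). Your algebra, the Hölder steps, and the $d^{1/p}$ bookkeeping all check out, and you correctly identify the one genuinely delicate point --- regularizing $|v_\star|^{p-2}$ near the zero set for $p<2$ and justifying the integration by parts on $\D^p_{\mathrm{loc}}(\L_0)$ --- together with the standard remedy, so there is no gap of substance to report.
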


A direct consequence of Theorem \ref{thm:UniquenessInDpmax} is that the operator $\L_{\infty}$ is dissipative in $L^p(\R^d,\C^N)$ for $1<p<\infty$, provided 
that $\beta_B$ from \eqref{equ:betaB} satisfies $\beta_B\leqslant 0$, \cite[Corollary 4.6]{Otten2015a}. 
Combining Theorem \ref{thm:UniquenessInDpmax}, \cite[Lemma 4.1]{Otten2015a}, \cite[Corollary 5.5]{Otten2014a} and \cite[Theorem 3.2]{Otten2015a} one can solve 
the identification problem for $\L_{\infty}$, which has been done in \cite[Theorem 5.1]{Otten2015a}.

\begin{theorem}[Maximal domain, local version]\label{thm:LpMaximalDomainPart1}
  Let the assumptions \eqref{cond:A8B}, \eqref{cond:A4DC} and \eqref{cond:A5} be satisfied for $1<p<\infty$ and $\K=\C$, then 
  \begin{align*}
    \D(A_p)=\D^p_{\mathrm{loc}}(\L_0)
  \end{align*} 
  is the maximal domain of $A_p$, where $\D^p_{\mathrm{loc}}(\L_0)$ is defined by
  \begin{align}
    \label{equ:localDomain}
    \D^p_{\mathrm{loc}}(\L_0):=\left\{v\in W^{2,p}_{\mathrm{loc}}(\R^d,\C^N)\cap L^p(\R^d,\C^N)\mid A\triangle v+\left\langle S\cdot,\nabla v\right\rangle\in L^p(\R^d,\C^N)\right\}.
  \end{align} 
  In particular, $A_p$ is the maximal realization of $\L_{\infty}$ in $L^p(\R^d,\C^N)$, i.e. $A_p v = \L_{\infty} v$ for every $v\in\D(A_p)$.
\end{theorem}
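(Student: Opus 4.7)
The plan is to use a standard resolvent-and-core argument that combines the three inputs explicitly mentioned just before the theorem statement: the closedness of $(\L_\infty,\D^p_{\mathrm{loc}}(\L_0))$ from \cite[Lemma 4.1]{Otten2015a}, the fact that $\S(\R^d,\C^N)$ is a core of $(A_p,\D(A_p))$ from \cite[Theorem 3.2]{Otten2015a}, the resolvent availability for $A_p$ from \cite[Corollary 5.5]{Otten2014a}, and the resolvent uniqueness for $\L_\infty$ from Theorem \ref{thm:UniquenessInDpmax}. First I would establish the easy inclusion $\D(A_p)\subseteq\D^p_{\mathrm{loc}}(\L_0)$ with identification $A_p v = \L_\infty v$ on $\D(A_p)$. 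On Schwartz functions $v\in\S(\R^d,\C^N)$, differentiating under the integral in the explicit heat-kernel formula \eqref{equ:OrnsteinUhlenbeckSemigroupLp}--\eqref{equ:HeatKernel} gives $A_p v=\L_\infty v$ pointwise, and clearly $\S\subseteq\D^p_{\mathrm{loc}}(\L_0)$. Hence the restriction $(\L_\infty)|_{\S}$ coincides with $(A_p)|_{\S}$. Since $\S$ is a core for $A_p$ and the operator $(\L_\infty,\D^p_{\mathrm{loc}}(\L_0))$ is closed, taking the graph closure yields
\begin{align*}
  A_p = \overline{(A_p)|_{\S}} = \overline{(\L_\infty)|_{\S}} \subseteq (\L_\infty,\D^p_{\mathrm{loc}}(\L_0)),
\end{align*}
which delivers both the inclusion and the pointwise equality $A_p v=\L_\infty v$ for all $v\in\D(A_p)$.

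For the opposite inclusion $\D^p_{\mathrm{loc}}(\L_0)\subseteq\D(A_p)$, I would use the resolvent trick. Fix any $\lambda\in\C$ with $\Re\lambda>\max\{\beta_B,-\min_{\mu\in\sigma(-B)}\Re\mu\}$, which places $\lambda$ in the resolvent set of $A_p$ by \cite[Corollary 5.5]{Otten2014a} and inside the regime covered by Theorem \ref{thm:UniquenessInDpmax}. Given $v\in\D^p_{\mathrm{loc}}(\L_0)$, set $g:=(\lambda I-\L_\infty)v\in L^p(\R^d,\C^N)$. Since $\lambda\in\rho(A_p)$ there exists $\tilde v\in\D(A_p)$ with $(\lambda I-A_p)\tilde v=g$. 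By the inclusion established in the first step, $\tilde v\in\D^p_{\mathrm{loc}}(\L_0)$ with $A_p\tilde v=\L_\infty\tilde v$, so $\tilde v$ solves $(\lambda I-\L_\infty)\tilde v=g$ as well. The uniqueness assertion of Theorem \ref{thm:UniquenessInDpmax} then forces $v=\tilde v\in\D(A_p)$.

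The main obstacle is the identification step on $\S$: one must be sure that the semigroup generator $A_p$, defined abstractly by the $L^p$-limit of difference quotients, produces the pointwise operator $\L_\infty$ on Schwartz functions. This requires justifying differentiation under the kernel integral and controlling the resulting $L^p$-limits uniformly; the calculation is standard once the heat-kernel representation \eqref{equ:HeatKernel} and the polynomial decay of $\S$ are invoked, but it is the place where assumptions \eqref{cond:A8B} and \eqref{cond:A5} enter in an essential way. Once this is in hand, the rest of the argument is a clean combination of closedness, the core property, and resolvent uniqueness; notice in particular that assumption \eqref{cond:A4DC} enters only through Theorem \ref{thm:UniquenessInDpmax}, so the equivalent antieigenvalue formulation from the main theorem of this paper could be substituted without changing the argument.
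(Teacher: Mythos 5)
Your proposal is correct and follows essentially the same route the paper indicates: the paper defers the proof to \cite[Theorem 5.1]{Otten2015a} but explicitly describes it as the combination of Theorem \ref{thm:UniquenessInDpmax} (resolvent uniqueness), the closedness of $(\L_{\infty},\D^p_{\mathrm{loc}}(\L_0))$ from \cite[Lemma 4.1]{Otten2015a}, the resolvent solvability for $A_p$ from \cite[Corollary 5.5]{Otten2014a}, and the core property of $\S(\R^d,\C^N)$ from \cite[Theorem 3.2]{Otten2015a}, which is exactly the closure-plus-resolvent argument you give. Your identification of the Schwartz-space computation as the technically delicate step, and of \eqref{cond:A4DC} as entering only through the uniqueness theorem, is also consistent with the paper's presentation.
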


Theorem \ref{thm:LpMaximalDomainPart1} shows that the $L^p$-dissipativity condition \eqref{cond:A4DC} is crucial to solve the identification problem 
for perturbed complex-valued Ornstein-Uhlenbeck operators. To apply Theorem \ref{thm:LpMaximalDomainPart1} it is helpful to understand which classes 
of matrices $A$ satisfy the algebraic condition \eqref{cond:A4DC}. This motivates to analyze the $L^p$-dissipativity condition \eqref{cond:A4DC} in detail.

In Section \ref{sec:TheLpAntieigenvalueCondition} we derive an algebraic characterization of the $L^p$-dissipativity condition \eqref{cond:A4DC} 
in terms of the antieigenvalues of the diffusion matrix $A$. For matrices $A\in\K^{N,N}$ with $\K\in\{\R,\C\}$ we prove in 
Theorem \ref{thm:LpDissipativityCondition} that the $L^p$-dissipativity condition \eqref{cond:A4DC} is satisfied if and only if the 
$L^p$-antieigenvalue condition \eqref{cond:A4} holds. The proof uses the method of Lagrange multipliers, first w.r.t. the $z$-component, then 
w.r.t. the $w$-component. 

In Section \ref{sec:SomeExplicitRepresentationsOfTheFirstAntieigenvalue} we discuss several special cases in which the first antieigenvalue can 
be given explicitly. For Hermitian positive definite matrices $A$ and for normal accretive matrices $A$ we specify well known explicit 
expressions for $\mu_1(A)$ in terms of the eigenvalues of $A$. These representations are proved in \cite[7.4.P4]{HornJohnson2013} for Hermitian 
positive definite matrices and in \cite[Theorem 5.1]{GustafsonSeddighin1989} for normal accretive matrices.

\textbf{Acknowledgment.} The author is greatly indebted to Wolf-J\"urgen Beyn for extensive discussions which helped in clarifying proofs.

%
%
\sect{\texorpdfstring{$L^p$}{Lp}-dissipativity condition versus \texorpdfstring{$L^p$}{Lp}-antieigenvalue condition}
\label{sec:TheLpAntieigenvalueCondition}

In this section we derive an algebraic characterization of the $L^p$-dissipativity condition \eqref{cond:A4DC} for the perturbed 
complex-valued Ornstein-Uhlenbeck operator $\L_{\infty}$ in $L^p(\R^d,\C^N)$ for $1<p<\infty$. More precisely, the next theorem shows 
that the $L^p$-dissipativity condition \eqref{cond:A4DC} is equivalent to a lower bound for the first antieigenvalue of the diffusion 
matrix $A$. The proof is based on an application of the method of Lagrange multipliers. An application of Theorem \ref{thm:LpDissipativityCondition} 
to $b:=p-2$ for $1<p<\infty$ shows that \eqref{cond:A4DC} and \eqref{cond:A4} are equivalent. The equivalence allows us to require either 
\eqref{cond:A4DC} or \eqref{cond:A4} in Theorem \ref{thm:UniquenessInDpmax} and Theorem \ref{thm:LpMaximalDomainPart1}.

\begin{theorem}[$L^p$-dissipativity condition vs. $L^p$-antieigenvalue condition]\label{thm:LpDissipativityCondition}
  Let $A\in\K^{N,N}$ for $K=\R$ if $N\geqslant 2$ and $\K=\C$ if $N\geqslant 1$, and let $b\in\R$ with $b>-1$. \\ 
  \enum{a} Given some $\gamma_A>0$, then the following statements are equivalent:
  \begin{align}
    &|z|^2\Re\left\langle w,Aw\right\rangle + b\Re\left\langle w,z\right\rangle\Re\left\langle z,Aw\right\rangle \geqslant \gamma_A|z|^2|w|^2 &&\forall\,w,z\in\K^N, \label{equ:DCProp1}\\
    &\left(1+\frac{b}{2}\right)\Re\left\langle w,Aw\right\rangle - \frac{|b|}{2}\left|Aw\right| \geqslant \gamma_A                            &&\forall\,w\in\K^N,\,|w|=1. \label{equ:DCProp2}
  \end{align}
  \enum{b} Moreover, the following statements are equivalent:
  \begin{align}
    &\exists\,\gamma_A>0:\;\left(1+\frac{b}{2}\right)\Re\left\langle w,Aw\right\rangle-\frac{|b|}{2}\left|Aw\right|\geqslant\gamma_A &&\forall\,w\in\K^N,\,|w|=1, \label{equ:DCProp3}\\
    &A\text{ invertible}\quad\text{and}\quad\mu_1(A) > \frac{|b|}{2+b}, \label{equ:DCProp4}
  \end{align}
  where $\mu_1(A)$ denotes the first antieigenvalue of $A$ in the sense of Definition \ref{def:AntieigenvalueAntieigenvectorAngle}.
\end{theorem}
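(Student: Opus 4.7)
\medskip
\textbf{Proof sketch.}
My plan is to treat the two parts in order, handling \enum{a} by a Lagrange multiplier calculation in the $z$-variable at fixed $w$ (as suggested in Section \ref{sec:Preliminaries}) and then reducing \enum{b} to the geometric content of the first antieigenvalue through a compactness argument on the unit sphere.

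For \enum{a}, both sides of \eqref{equ:DCProp1} are homogeneous of degree two in each of $z$ and $w$ separately, so it suffices to work under $|z|=|w|=1$. Since $|z|^{2}\Re\langle w,Aw\rangle = \Re\langle w,Aw\rangle$ under this normalization, the whole equivalence reduces to the identity
\begin{align*}
\inf_{|z|=1} b\,\Re\langle w,z\rangle\,\Re\langle z,Aw\rangle \;=\; \frac{b}{2}\,\Re\langle w,Aw\rangle \;-\; \frac{|b|}{2}\,|Aw|,\quad |w|=1.
\end{align*}
To compute the inner infimum I identify $\K^{N}$ with its underlying real Hilbert space ($\R^{N}$ when $\K=\R$, $\R^{2N}$ when $\K=\C$) via the standard $\Re\langle u,v\rangle_{\K^{N}} = \langle u,v\rangle_{\R^{\dim}}$. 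The map $z\mapsto b\,\Re\langle w,z\rangle\,\Re\langle z,Aw\rangle$ then becomes a real quadratic form $z^{T}Mz$ with symmetric matrix
\begin{align*}
M \;=\; \frac{b}{2}\left(u_{1}u_{2}^{T} + u_{2}u_{1}^{T}\right),\quad u_{1} = w,\; u_{2} = Aw,
\end{align*}
of real rank at most two. The Lagrange stationarity condition $Mz=\lambda z$ under $|z|=1$ forces $z\in\mathrm{span}(u_{1},u_{2})$; the reduced $2\times 2$ eigenvalue problem with Gram entries $|u_{1}|^{2}=1$, $|u_{2}|^{2}=|Aw|^{2}$, $u_{1}^{T}u_{2}=\Re\langle w,Aw\rangle$ yields eigenvalues $\tfrac{b}{2}(\Re\langle w,Aw\rangle \pm |Aw|)$. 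By Cauchy--Schwarz $|\Re\langle w,Aw\rangle|\leqslant|Aw|$, so irrespective of the sign of $b$ the smaller of these two values also lies at or below the trivial eigenvalue $0$ and equals the claimed minimum.

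For \enum{b}, if $A$ is not invertible there is a unit vector $w$ with $Aw=0$, so the left side of \eqref{equ:DCProp3} vanishes and no $\gamma_{A}>0$ can satisfy the bound; invertibility of $A$ is therefore necessary. Assuming $A$ invertible, $w\mapsto |Aw|$ is continuous and bounded below by a positive constant on the compact unit sphere, so \eqref{equ:DCProp3} is equivalent to the strict pointwise inequality $(1+b/2)\,\Re\langle w,Aw\rangle > (|b|/2)\,|Aw|$ for all $|w|=1$. Since $b>-1$ gives $1+b/2 = (2+b)/2 > 0$, this rearranges to $\Re\langle w,Aw\rangle/|Aw| > |b|/(2+b)$ uniformly on the sphere, which by compactness and Definition \ref{def:AntieigenvalueAntieigenvectorAngle} of $\mu_{1}(A)$ is exactly $\mu_{1}(A) > |b|/(2+b)$. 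Conversely, the strict bound on $\mu_{1}(A)$ makes the (attained) minimum of $(1+b/2)\Re\langle w,Aw\rangle - (|b|/2)|Aw|$ on $|w|=1$ strictly positive, which furnishes a valid $\gamma_{A}$.

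The step I expect to require the most care is the Lagrange reduction in the complex case, where $z$ carries $2N-1$ real degrees of freedom rather than $N-1$; one must verify that the various degenerate sub-cases ($Aw=0$, $Aw$ a real multiple of $w$, or $Aw$ a non-real complex multiple of $w$ so that $\{w,Aw\}$ is $\C$-linearly dependent but the real images remain $\R^{2N}$-linearly independent) all produce the same closed formula for the infimum. A secondary subtlety is that the strict inequality in \eqref{equ:DCProp4} is essential: equality would only yield $\gamma_{A}=0$, which one sees by extracting a convergent subsequence from a minimizing sequence on the unit sphere.
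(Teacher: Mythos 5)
Your proposal is correct, and for part \enum{a} it takes a genuinely different route from the paper. The paper minimizes $f(w,z)$ over $|z|=1$ by writing down the Lagrange stationarity system explicitly, splitting into the sub-cases where $w$ and $Aw$ are linearly dependent or independent, solving for the four critical pairs $(\alpha,\beta)=(\langle z,Aw\rangle,\langle w,z\rangle)$ in closed form (square roots of $\tfrac{r(r\mp q)}{2}$ and $\tfrac{r\mp q}{2r}$), and then evaluating $f$ at each critical point to identify the global minimum $\bigl(1+\tfrac{b}{2}\bigr)q-\tfrac{|b|}{2}r$. You instead observe that after realification the term $b\,\Re\langle w,z\rangle\Re\langle z,Aw\rangle$ is the quadratic form of the rank-two symmetric matrix $\tfrac{b}{2}\bigl(u_{1}u_{2}^{T}+u_{2}u_{1}^{T}\bigr)$, whose nonzero eigenvalues $\tfrac{b}{2}(q\pm r)$ drop out of the $2\times 2$ Gram reduction; since $|q|\leqslant r$, the smaller one equals $\tfrac{b}{2}q-\tfrac{|b|}{2}r\leqslant 0$ in either sign case of $b$, so it is the global minimum on the sphere. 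This is cleaner: it subsumes the paper's linear-dependence sub-case, the degenerate case $Aw=0$, and the sign discussion for $b$ in one computation, and it sidesteps any second-order (Hessian) verification because the minimum of a quadratic form on the sphere is exactly its smallest eigenvalue. Both proofs use the same identification $\C^{N}\cong\R^{2N}$ for the complex case. For part \enum{b} the content is the same; the only difference is that you invoke compactness of the unit sphere to pass between the uniform bound and the strict pointwise inequality, whereas the paper manufactures explicit constants via $|w|\leqslant|A^{-1}|\,|Aw|$ and $|Aw|\leqslant|A|\,|w|$ (which has the minor advantage of producing a concrete $\gamma_{A}$ in terms of $\delta_{A}$ and $|A^{-1}|$). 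Both are valid in the finite-dimensional setting of the theorem.
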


\begin{proof}
  \enum{a}: Obviously, dividing both sides by $|z|^2|w|^2$, \eqref{equ:DCProp1} is equivalent to
  \begin{align}
    \Re\left\langle w,Aw\right\rangle + b\Re\left\langle w,z\right\rangle\Re\left\langle z,Aw\right\rangle \geqslant \gamma_A &&\forall\,w,z\in\K^N,\,|z|=|w|=1. \label{equ:DCProp5}
  \end{align}
  We now prove the equivalence of \eqref{equ:DCProp5} and \eqref{equ:DCProp2}. The case $b=0$ is trivial, so assume w.l.o.g. $b\neq 0$. 
  We distinguish between the cases $\K=\R$ and $\K=\C$. \\
  \textbf{Case 1:} ($\K=\R$). Let $N\geqslant 2$. In this case we show the equivalence of
  \begin{align}
    &\left\langle w,Aw\right\rangle + b\left\langle w,z\right\rangle\left\langle z,Aw\right\rangle \geqslant \gamma_A 
      &&\forall\,w,z\in\R^N,\,|z|=|w|=1, \label{equ:Lpdissipativity1} \\
    &\left(1+\frac{b}{2}\right)\left\langle w,Aw\right\rangle - \frac{|b|}{2}\left|Aw\right| \geqslant \gamma_A
      &&\forall\,w\in\R^N,\,|w|=1, \label{equ:Lpdissipativity2}
  \end{align}
  for some $\gamma_A>0$ by minimizing \eqref{equ:Lpdissipativity1} with respect to $z$ subject to $|z|^2=1$. Note that the minimum 
  exists due to the boundedness of
  \begin{align*}
    \left|\left\langle z,Aw\right\rangle\left\langle w,z\right\rangle\right| \leqslant |z|^2|Aw||w| = |Aw|.
  \end{align*}
  \textbf{Subcase 1:} ($w$, $Aw$ linearly dependent). Let $w$ and $Aw$ be linearly dependent, then there exists $\lambda\in\R$ 
  such that $Aw=\lambda w$. Since $|w|=1$, we conclude $w\neq 0$ and therefore, $\lambda\in\sigma(A)$. Applying \eqref{equ:Lpdissipativity1} with $z:=w$
  \begin{align*}
    0<\gamma_A\leqslant \left\langle w,Aw\right\rangle + b\left\langle w,w\right\rangle\left\langle w,Aw\right\rangle = (1+b)\lambda
  \end{align*}
  we deduce $\lambda>0$, since $b>-1$. In this case \eqref{equ:Lpdissipativity1} and \eqref{equ:Lpdissipativity2} reads as
  \begin{align}
    &\lambda\left|w\right|^2 + \lambda b \left\langle w,z\right\rangle^2 \geqslant \gamma_A 
      &&\forall\,w,z\in\R^N,\,|z|=|w|=1, \label{equ:Lpdissipativity1REAL} \\
    &\left(1+\frac{b}{2}\right)\lambda\left|w\right|^2 - \frac{|b|}{2}\left|\lambda\right|\left|w\right| \geqslant \gamma_A
      &&\forall\,w\in\R^N,\,|w|=1. \label{equ:Lpdissipativity2REAL}
  \end{align}
  The aim follows by minimization of $\lambda b\left\langle w,z\right\rangle^2$ with respect to $z$ subject to $|z|^2=1$. If $b>0$ then $\lambda b>0$ 
  and therefore, $\lambda b\left\langle w,z\right\rangle^2$ is minimal iff $\left\langle w,z\right\rangle^2$ is minimal. Choose $z\in w^{\perp}$ with 
  $|z|=1$ then the minimum is
  \begin{align*}
    \min_{\substack{z\in\R^N\\|z|=1}}\lambda b\left\langle w,z\right\rangle^2 = \min_{\substack{z\in w^{\perp}\\|z|=1}}\lambda b\left\langle w,z\right\rangle^2 = 0.
  \end{align*}
  If $b<0$ then $\lambda b<0$ and therefore, $\lambda b\left\langle w,z\right\rangle^2$ is minimal iff $\left\langle w,z\right\rangle^2$ is maximal. 
  Choose $z\in\left\{w,-w\right\}$ then the minimum is
  \begin{align*}
    \min_{\substack{z\in\R^N\\|z|=1}}\lambda b\left\langle w,z\right\rangle^2 = \lambda b<0.
  \end{align*}
  \textbf{Subcase 2:} ($w$, $Aw$ linearly independent). For this purpose we use the method of Lagrange multipliers for 
  finding the local minima of \eqref{equ:Lpdissipativity1} w.r.t. $z$. Consider the functions
  \begin{align*}
    f(w,z):=&\left\langle w,Aw\right\rangle + b\left\langle w,z\right\rangle\left\langle z,Aw\right\rangle-\gamma_A, \\
      g(z):=&|z|^2-1=0
  \end{align*}
  for every fixed $w\in\R^N$ with $|w|=1$. The optimization problem is to minimize $f(w,z)$ w.r.t. $z\in\R^N$ subject to the constraint $g(z)=0$. \\
  1. We introduce a new variable $\mu\in\R$, called the Lagrange multiplier, and define the Lagrange function (Lagrangian)
  \begin{align*}
    \Lambda:\R^N\times\R\rightarrow\R,\quad \Lambda(z,\mu):=f(z,w)+\mu g(z).
  \end{align*}
  The solution of the minimization problem corresponds to a critical point of the Lagrange function. A necessary condition for critical points of $\Lambda$ is that 
  the Jacobian vanishes, i.e. $J_{\Lambda}(z,\mu)=0$. This leads to the equations
  \begin{align}
    &b\left\langle z,Aw\right\rangle w+b\left\langle w,z\right\rangle Aw + 2\mu z=0, \label{equ:Lpdissipativity3} \\
    &|z|^2-1 = 0, \label{equ:Lpdissipativity4}
  \end{align}
  i.e. every local minimizer $z$ satisfies \eqref{equ:Lpdissipativity3} and \eqref{equ:Lpdissipativity4}. \\
  2. Multiplying \eqref{equ:Lpdissipativity3} from the left by $z^T$ and using \eqref{equ:Lpdissipativity4} we obtain
  \begin{align*}
    0 = 2b\left\langle z,Aw\right\rangle\left\langle w,z\right\rangle + 2\mu |z|^2 = 2b\alpha\beta + 2\mu,
  \end{align*}
  and thus $\mu=-b\alpha\beta$, where $\alpha:=\left\langle z,Aw\right\rangle$ and $\beta:=\left\langle w,z\right\rangle$ are still to be determined. Now, inserting 
  $\mu=-b\alpha\beta$ into \eqref{equ:Lpdissipativity3} and dividing both sides by $b\neq 0$ we obtain
  \begin{align}
    \alpha w+\beta Aw -2\alpha\beta z = 0. \label{equ:Lpdissipativity5}
  \end{align}
  From \eqref{equ:Lpdissipativity5} we deduce that if $\alpha=0$ then $\beta=0$ and vice versa. If $\alpha=\beta=0$ then $z\in\left\{w,Aw\right\}^{\perp}$ and the 
  minimum of $f(w,z)$ in $z$ subject to $g(z)=0$ is $\left\langle w,Aw\right\rangle-\gamma_A$.\\
  In the following we consider the case $\alpha\neq 0$ and $\beta\neq 0$ and we show that in this case the minimum of $f(w,z)$ in $z$ subject to $g(z)=0$ is even smaller. 
  Note that, assuming $\alpha\neq 0$ and $\beta\neq 0$, \eqref{equ:Lpdissipativity5} yields the following representation for $z$
  \begin{align}
    \label{equ:RepresentationForMinimizerZ}
    z = \frac{1}{2\alpha\beta}\left(\alpha w + \beta Aw\right) = \frac{1}{2\beta}w + \frac{1}{2\alpha}Aw,
  \end{align}
  We now look for possible solutions for $\alpha$ and $\beta$. \\
  3. Multiplying \eqref{equ:Lpdissipativity5} from the left by $w^T$ and using $|w|=1$ we obtain
  \begin{align}
    0 = \alpha |w|^2 + \beta\left\langle w,Aw\right\rangle - 2\alpha\beta\left\langle w,z\right\rangle
      = \alpha + \beta q - 2\alpha\beta^2, \label{equ:AlphaBetaEquation1}
  \end{align}
  where $q:=\left\langle w,Aw\right\rangle$. Multiplying \eqref{equ:Lpdissipativity5} from the left by $(Aw)^T$ we obtain
  \begin{align}
    0 = \alpha\beta\left\langle Aw,w\right\rangle + \beta\left\langle Aw,Aw\right\rangle -2\alpha\beta\left\langle Aw,z\right\rangle
      = \alpha q + \beta r^2 - 2\alpha^2\beta, \label{equ:AlphaBetaEquation2}
  \end{align}
  where $r:=|Aw|$. From \eqref{equ:Lpdissipativity1} with $z:=w$ we deduce that $q>0$ since $b>-1$ and $\gamma_A>0$. Moreover, we have $r>0$: Assuming $r=|Aw|=0$ yields $Aw=0$ for 
  some $|w|=1$ which contradicts $\gamma_A>0$, compare \eqref{equ:Lpdissipativity1}. Since $r>0$, $q>0$ and by assumption $\alpha\neq 0$ and $\beta\neq 0$, 
  there exist four solutions of \eqref{equ:AlphaBetaEquation1}, \eqref{equ:AlphaBetaEquation2} given by
  \begin{align}
    \label{equ:SolutionsForAlphaBeta}
    (\alpha,\beta)\in\left\{\left(\mp\sqrt{\frac{r(r-q)}{2}},\pm\sqrt{\frac{r-q}{2r}}\right),\left(\pm\sqrt{\frac{r(r+q)}{2}},\pm\sqrt{\frac{r+q}{2r}}\right)\right\}.
  \end{align}
  Note, that $r\pm q>0$ and therefore $\left(\alpha,\beta\right)\neq (0,0)$. This follows from the Cauchy-Schwarz inequality and $|w|=1$
  \begin{align*}
    \pm q\leqslant |q| = \left|\left\langle w,Aw\right\rangle\right| < |w||Aw| = r.
  \end{align*}
  Note that we have indeed a strict inequality since $w$ and $Aw$ are linearly independent by our subcase. \\
  4. Instead of investigating whether the Hessian of $f$ at these points is positive definite or not, we evaluate the function $f$ at the points \eqref{equ:RepresentationForMinimizerZ} 
  with $(\alpha,\beta)$ from \eqref{equ:SolutionsForAlphaBeta} directly. First we observe that
  \begin{align}
    \label{equ:RepresentationFofWAndZ}
    f(w,z) = \left\langle w,Aw\right\rangle + b\left\langle w,z\right\rangle\left\langle z,Aw\right\rangle-\gamma_A
           = q + b\alpha\beta - \gamma_A.
  \end{align}
  We now distinguish between the two cases $b>0$ and $b<0$. If $b>0$ then the function $f(w,z)$ is minimal if $\mathrm{sgn}\,\alpha=-\mathrm{sgn}\,\beta$ and if 
  $b<0$ then $f(w,z)$ is minimal if $\mathrm{sgn}\,\alpha=\mathrm{sgn}\,\beta$. Therefore, for the choice of
  \begin{align}
    \label{equ:AlphaBetaSolutions}
    (\alpha,\beta)=\begin{cases}
                     \left(\mp\sqrt{\frac{r(r-q)}{2}},\pm\sqrt{\frac{r-q}{2r}}\right) &,\,b>0, \\
                     \left(\pm\sqrt{\frac{r(r+q)}{2}},\pm\sqrt{\frac{r+q}{2r}}\right) &,\,b<0,
                   \end{cases}
  \end{align}
  the term $b\alpha\beta$ is negative and we have found the global minimum. Thus, for $b>0$ we obtain
  \begin{align}
    \label{equ:CaseBpositive}
    b\alpha\beta = -b\sqrt{\frac{r(r-q)}{2}}\sqrt{\frac{r-q}{2r}} = -\frac{b}{2}(r-q) = -\frac{|b|}{2}r+\frac{b}{2}q < 0 
  \end{align}
  and similarly for $b<0$ we obtain
  \begin{align}
    \label{equ:CaseBnegative}
    b\alpha\beta = b\sqrt{\frac{r(r+q)}{2}}\sqrt{\frac{r+q}{2r}} = \frac{b}{2}(r+q) = -\frac{|b|}{2}r+\frac{b}{2}q < 0.
  \end{align}
  Therefore, using \eqref{equ:RepresentationFofWAndZ}, \eqref{equ:CaseBpositive} and \eqref{equ:CaseBnegative}, the global minimum of $f(w,z)$ in $z$ subject 
  to the constraint $g(z)=0$ is given by
  \begin{align*}
    \min_{\substack{z\in\R^N\\|z|=1}}f(w,z) = \min_{\substack{z\in\R^N\\|z|=1}}\left(q+b\alpha\beta-\gamma_A\right) = \left(1+\frac{b}{2}\right)q - \frac{|b|}{2}r - \gamma_A
  \end{align*}
  for every fixed $w\in\R^N$ with $|w|=1$. In particular, defining 
  \begin{align}
    \label{equ:MinimizerZ}
    (z_{\star},\mu_{\star}) = \left(\frac{1}{2\beta}w+\frac{1}{2\alpha}Aw,-b\alpha\beta\right)\text{ with $\alpha$, $\beta$ from \eqref{equ:AlphaBetaSolutions}}.
  \end{align}
  the above minimum is attained at $z_{\star}$ from \eqref{equ:MinimizerZ} since
  \begin{align}
    \label{equ:EquationForW}
    f(w):=f(w,z_{\star})=\left(1+\frac{b}{2}\right)q - \frac{|b|}{2}r - \gamma_A
  \end{align}
  for every fixed $w\in\R^N$ with $|w|=1$. Taking \eqref{equ:DCProp5} into account, \eqref{equ:EquationForW} must be nonnegative for every $w\in\R^N$ with $|w|=1$. 
  This corresponds exactly \eqref{equ:DCProp2}. \\
  \textbf{Case 2:} ($\K=\C$). In this case we apply Case 1 (with $\K=\R$). For this purpose, we write
  \begin{align*}
    \C^N \ni w &= w_1+iw_2 \cong \begin{pmatrix}w_1\\w_2\end{pmatrix} = w_{\R} \in\R^{2N}, \\
    \C^N \ni z &= z_1+iz_2 \cong \begin{pmatrix}z_1\\z_2\end{pmatrix} = z_{\R} \in\R^{2N}, \\
    \C^{N,N} \ni A &= A_1+iA_2 \cong \begin{pmatrix}A_1 &-A_2\\A_2 &A_1\end{pmatrix} = A_{\R} \in\R^{2N,2N}.
  \end{align*}
  From
  \begin{align*}
    \left\langle w,z\right\rangle &= \left\langle w_1,z_1\right\rangle + \left\langle w_2,z_2\right\rangle + i\left(\left\langle w_1,z_2\right\rangle - \left\langle w_2,z_1\right\rangle\right)
  \end{align*}
  we deduce
  \begin{align*}
    \Re\left\langle w,z\right\rangle = \left\langle w_{\R},z_{\R}\right\rangle,\quad
    \Re\left\langle w,Aw\right\rangle = \left\langle w_{\R},A_{\R}w_{\R}\right\rangle,\quad
    \left|Aw\right| = \left|A_{\R}w_{\R}\right|.
  \end{align*}
  Therefore, \eqref{equ:DCProp5} translates into
  \begin{align*}
    \left\langle w_{\R},A_{\R}w_{\R}\right\rangle + b\left\langle w_{\R},z_{\R}\right\rangle\left\langle z_{\R},A_{\R}w_{\R}\right\rangle \geqslant \gamma_A 
      &&\forall\,w_{\R},z_{\R}\in\R^{2N},\,|z_{\R}|=|w_{\R}|=1.
  \end{align*}
  Due to Case 1 this is equivalent to
  \begin{align*}
    \left(1+\frac{b}{2}\right)\left\langle w_{\R},A_{\R}w_{\R}\right\rangle - \frac{|b|}{2}\left|A_{\R}w_{\R}\right| \geqslant \gamma_A 
      &&\forall\,w_{\R}\in\R^{2N},\,|w_{\R}|=1,
  \end{align*}
  that translates back into
  \begin{align*}
    \left(1+\frac{b}{2}\right)\Re\left\langle w,Aw\right\rangle - \frac{|b|}{2}\left|Aw\right| \geqslant \gamma_A
      &&\forall\,w\in\C^N,\,|w|=1,
  \end{align*}
  which proves the case $\K=\C$. \\
  \enum{b}: We prove that \eqref{equ:DCProp3} is equivalent to
  \begin{align}
    \text{$A$ is invertible}\quad\text{and}\quad\exists\,\delta_A>1:\;\frac{\left(2+b\right)}{|b|}\cdot\frac{\Re\left\langle w,Aw\right\rangle}{|w||Aw|}\geqslant \delta_A\;\forall\,w\in\K^N,\,w\neq 0,\,Aw\neq 0. \label{equ:DCProp6}
  \end{align}
  Then, by Definition \ref{def:AntieigenvalueAntieigenvectorAngle} of the first antieigenvalue \eqref{equ:DCProp6} is equivalent to
  \begin{align}
    \text{$A$ is invertible}\quad\text{and}\quad\exists\,\delta_A>1:\;\frac{\left(2+b\right)}{|b|}\cdot\mu_1(A)\geqslant \delta_A. \label{equ:DCProp7}
  \end{align}
  and, obviously, \eqref{equ:DCProp7} is equivalent to \eqref{equ:DCProp4}. This completes the proof.\\
  \eqref{equ:DCProp3}$\Longleftarrow$\eqref{equ:DCProp6}: Multiplying the numerator and the denominator by $\frac{1}{|w|^2}$, allows us to consider \eqref{equ:DCProp6} 
  for $w\in\K^N$ with $|w|=1$ and $Aw\neq 0$. Since $A$ is invertible, $Aw\neq 0$ is satisfied for every $w\in\K^N$ with $|w|=1$. Multiplying \eqref{equ:DCProp6} by 
  $\frac{|b|}{2}|w||Aw|$ and using the inequality $|w|=|A^{-1}Aw|\leqslant |A^{-1}||Aw|$ we obtain 
  \begin{align*}
    \left(1+\frac{b}{2}\right)\Re\left\langle w,Aw\right\rangle &\geqslant \frac{|b|}{2}|w||Aw|\delta_A = \frac{|b|}{2}|w||Aw|+\frac{|b|}{2}|w||Aw|\left(\delta_A-1\right) \\
    &\geqslant \frac{|b|}{2}|w||Aw|+\frac{|b|}{2}\frac{|w|}{|A^{-1}|}\left(\delta_A-1\right) = \frac{|b|}{2}|Aw|+\gamma_A,
  \end{align*}
  for every $w\in\K^N$ with $|w|=1$, where $\gamma_A:=\frac{|b|}{2}\frac{1}{|A^{-1}|}\left(\delta_A-1\right)$.\\
  \eqref{equ:DCProp3}$\Longrightarrow$\eqref{equ:DCProp6}: Let $\lambda_j^A$ for $j=1,\ldots,N$ denote the $j$-th eigenvalue corresponding to the $j$-th eigenvector 
  $v_j$ with $|v_j|=1$ of the matrix $A$. Then the multiplication of \eqref{equ:DCProp3} by $\frac{2}{2+b}$ implies
  \begin{align*}
    \Re\lambda_j^A = \Re\lambda_j^A |v_j|^2 = \Re\left\langle v_j,Av_j\right\rangle 
    \geqslant \Re\left\langle v_j,Av_j\right\rangle -\frac{|b|}{2+b}|Av_j| \geqslant \gamma_A\frac{2}{2+b} >0,
  \end{align*}
  thus $\Re\sigma(A)>0$ and hence, $A$ is invertible. Multiplying \eqref{equ:DCProp3} by $\frac{2}{|b||Aw|}$ we obtain
  \begin{align*}
    \frac{(2+b)}{|b|}\frac{\Re\left\langle w,Aw\right\rangle}{|Aw|}\geqslant \frac{2}{|b|}\frac{\gamma_A}{|Aw|}+1\quad\forall\,w\in\K^N,\,|w|=1,\,Aw\neq 0.
  \end{align*}
  Now, let $w\in\K^N$ with $w\neq 0$ and $Aw\neq 0$, then $\left|\frac{w}{|w|}\right|=1$ and we further obtain
  \begin{align*}
    \frac{(2+b)}{|b|}\frac{\Re\left\langle w,Aw\right\rangle}{|w||Aw|}
    \geqslant \frac{2}{|b|}\frac{\gamma_A}{|A|}+1=:\delta_A>1\quad \forall\,w\in\K^N,\,w\neq 0,\,Aw\neq 0,
  \end{align*}
  where we used $|Aw|\leqslant |A||w|$.
\end{proof}

%
%
\sect{Special cases and explicit representations of the first antieigenvalue}
\label{sec:SomeExplicitRepresentationsOfTheFirstAntieigenvalue}
 
An application of Theorem \ref{thm:LpDissipativityCondition} with $b:=p-2$ and $1<p<\infty$ implies that the $L^p$-dissipativity condition \eqref{cond:A4DC} 
is equivalent to our new $L^p$-antieigenvalue condition \eqref{cond:A4} which states that the diffusion matrix $A$ is invertible and satisfies the 
$L^p$-antieigenvalue bound
\begin{align*}
  \mu_1(A) > \frac{|p-2|}{p}\in[0,1[,\quad 1<p<\infty.
\end{align*}
This lower $p$-dependent bound for the first antieigenvalue of $A$ is equivalent to an upper $p$-dependent bound for the (real) angle of $A$
\begin{align*}
  \Phi_{\R}(A):=\cos^{-1}\left(\mu_1(A)\right)<\cos^{-1}\left(\frac{|p-2|}{p}\right)\in\,]0,\frac{\pi}{2}],\quad 1<p<\infty.
\end{align*}
In this section we discuss several special cases in which the first antieigenvalue of the matrix $A$ can be given explicitly. In addition, we analyze the 
geometric meaning of the $L^p$-antieigenvalue bound and investigate its behavior for $1<p<\infty$. Note that for general matrices $A$ one cannot expect 
that there is an explicit expression for the first antieigenvalue $\mu_1(A)$ of a matrix $A$. However, for certain classes of matrices it is possible to 
derive a closed formula for $\mu_1(A)$ as it is shown in the following. These explicit representations facilitate to check the validity of the $L^p$-antieigenvalue 
bound.

\subsection{The scalar real case: (Positivity)}\label{sec:4.1}
In the scalar real case $A=a\in\R$ (with $\K=\R$ and $N=1$) the statements \eqref{equ:DCProp1} and \eqref{equ:DCProp5} are equivalent, but they are in general 
not equivalent with \eqref{equ:DCProp2}. In particular, there exists a constant $\gamma_{a}$ with \eqref{equ:DCProp5} if and only if 
$(p-1)a=(1+b)a>0$. Since $b=p-2$ with $1<p<\infty$, this is equivalent to $a>0$, compare assumption \eqref{cond:A4}. Note that the scalar real case has not been 
treated in Theorem \ref{thm:LpDissipativityCondition} and therefore, it has been analyzed here. We point out that in this case the first antieigenvalue bound 
does not appear.

\subsection{The scalar complex case: (A cone condition)}\label{sec:4.2}
In the scalar complex case $A=\alpha\in\C$ (with $\K=\C$ and $N=1$) there exists a constant $\gamma_{\alpha}$ with \eqref{equ:DCProp2}, $b:=p-2$ and $1<p<\infty$, 
if and only if one of the following \begriff{cone conditions} hold
\begin{align}
  &\frac{\left|p-2\right|}{2\sqrt{p-1}}\left|\Im\alpha\right|<\Re\alpha,                                                              \label{equ:DCProp8}\\
  &\left|\arg\alpha\right|<\cos^{-1}\left(\frac{\left|p-2\right|}{p}\right)=\arctan\left(\frac{2\sqrt{p-1}}{\left|p\right|}\right). \label{equ:DCProp9}
\end{align}
This conditions will be discussed below for normal matrices in more details. Condition \eqref{equ:DCProp8} has also been established in \cite[Theorem 2]{CialdeaMazya2005} 
for differential operators with constant coefficients and in \cite[Theorem 4]{CialdeaMazya2005} for differential operators with variable coefficients but on 
bounded domains. Therefore, this result can be considered as an extension of \cite{CialdeaMazya2005}. 

\subsection{\texorpdfstring{$\boldsymbol{\mu_1(A)}$}{mu1(A)} for Hermitian positive definite matrices}\label{sec:4.3}
If $A$ is a Hermitian positive definite matrix, then $\mu_1(A)$ is given by, \cite[7.4.P4]{HornJohnson2013}, 
\begin{align}
  \mu_1(A) = \frac{\sqrt{\lambda_1^A\lambda_N^A}}{\frac{1}{2}\left(\lambda_1^A+\lambda_N^A\right)} = \frac{2\sqrt{\kappa_A}}{\kappa_A+1}
           = \frac{\mathrm{GeometricMean(\lambda_1^A,\lambda_N^A)}}{\mathrm{ArithmeticMean(\lambda_1^A,\lambda_N^A)}}, \label{equ:DCProp10}
\end{align}
where $0<\lambda_1^A\leqslant \lambda_2^A\leqslant\cdots\leqslant\lambda_N^A$ denote the (real) positive eigenvalues of $A$ and $\kappa_A:=\frac{\lambda_N^A}{\lambda_1^A}$ 
denotes the \begriff{spectral condition number of $A$}. In this case $\mu_1(A)$ is the quotient of the geometric mean $\sqrt{\lambda_1^A\lambda_N^A}$ 
and the arithmetic mean $\frac{1}{2}\left(\lambda_1^A+\lambda_N^A\right)$ of the smallest and largest eigenvalue of $A$. In particular, the equality 
$\mu_1(A)=\frac{\Re\left\langle w,Aw\right\rangle}{|Aw|}$ is satisfied for the antieigenvector $w=\sqrt{\lambda_N^A}u_1+\sqrt{\lambda_1^A}u_N$, 
where $u_1,u_N\in\K^N$ are orthogonal vectors with $Au_1=\lambda_1^A u_1$ and $Au_N=\lambda_N^A u_N$ such that $|w|=1$. This follows directly from the Greub-Rheinboldt inequality, 
\cite[(7.4.12.11)]{HornJohnson2013}, and can be found in \cite[7.4.P4]{HornJohnson2013} and \cite[Corollary 2]{Mirman1983}.
 
Note that for $1<p<\infty$ the $L^p$-antieigenvalue condition \eqref{cond:A4} and \eqref{equ:DCProp10} imply 
\begin{align*}
  \frac{\sqrt{\lambda_1^A\lambda_N^A}}{\frac{1}{2}\left(\lambda_1^A+\lambda_N^A\right)} = \mu_1(A) > \frac{|p-2|}{p}
  \quad\Longleftrightarrow\quad \left(\frac{1}{2}-\frac{1}{p}\right)^2\left(\lambda_1^A+\lambda_N^A\right)^2 < \lambda_1^A\lambda_N^A.
\end{align*}
The latter inequality also appears in \cite[Theorem 7]{Cialdea2009}, where the authors analyzed $L^p$-dissipativity of the differential operator $\nabla^T(Q\nabla v)$ 
for symmetric, positive definite matrices $Q\in\R^{d,d}$. 

If we define $q:=\frac{|p-2|}{p}$ for $1<p<\infty$, then $q\in[0,1)$ and the $L^p$-antieigenvalue condition \eqref{cond:A4} with \eqref{equ:DCProp10} is equivalent to
\begin{align*}
  \frac{2-q^2-2\sqrt{1-q^2}}{q^2}<\kappa_A<\frac{2-q^2+2\sqrt{1-q^2}}{q^2},\text{ for $0<q<1$.}
\end{align*}
Using the definition of $q$, this inequality implies
\begin{align*}
  C_L(p):=\frac{p^2+4p-4-4p\sqrt{p-1}}{(p-2)^2}<\kappa_A<\frac{p^2+4p-4+4p\sqrt{p-1}}{(p-2)^2}=:C_R(p),
\end{align*}
for $1<p<\infty$ and $p\neq 2$. These are lower and upper bounds for the spectral condition number of $A$. Of course, since 
$0<\lambda_1^A\leqslant \lambda_2^A\leqslant\cdots\leqslant\lambda_N^A$ not only $\kappa_A=\frac{\lambda_N^A}{\lambda_1^A}$ but also $\frac{\lambda_j^A}{\lambda_1^A}$ must 
be contained in the open interval $(C_L(p),C_R(p))$ for every $1\leqslant j\leqslant N$. The behavior of the constants $C_L(p)$ and $C_R(p)$ is depicted in 
Figure \ref{fig:LpDissipativityHermitianMatrices}(a). In particular, to satisfy this condition for arbitrary large $p$, i.e. $p$ near $\infty$, the matrix $A$ must be 
of the form $A=aI_N$ for some $0<a\in\R$.
\begin{figure}[ht]
  \centering
  \subfigure[]{\includegraphics[width=0.48\textwidth]{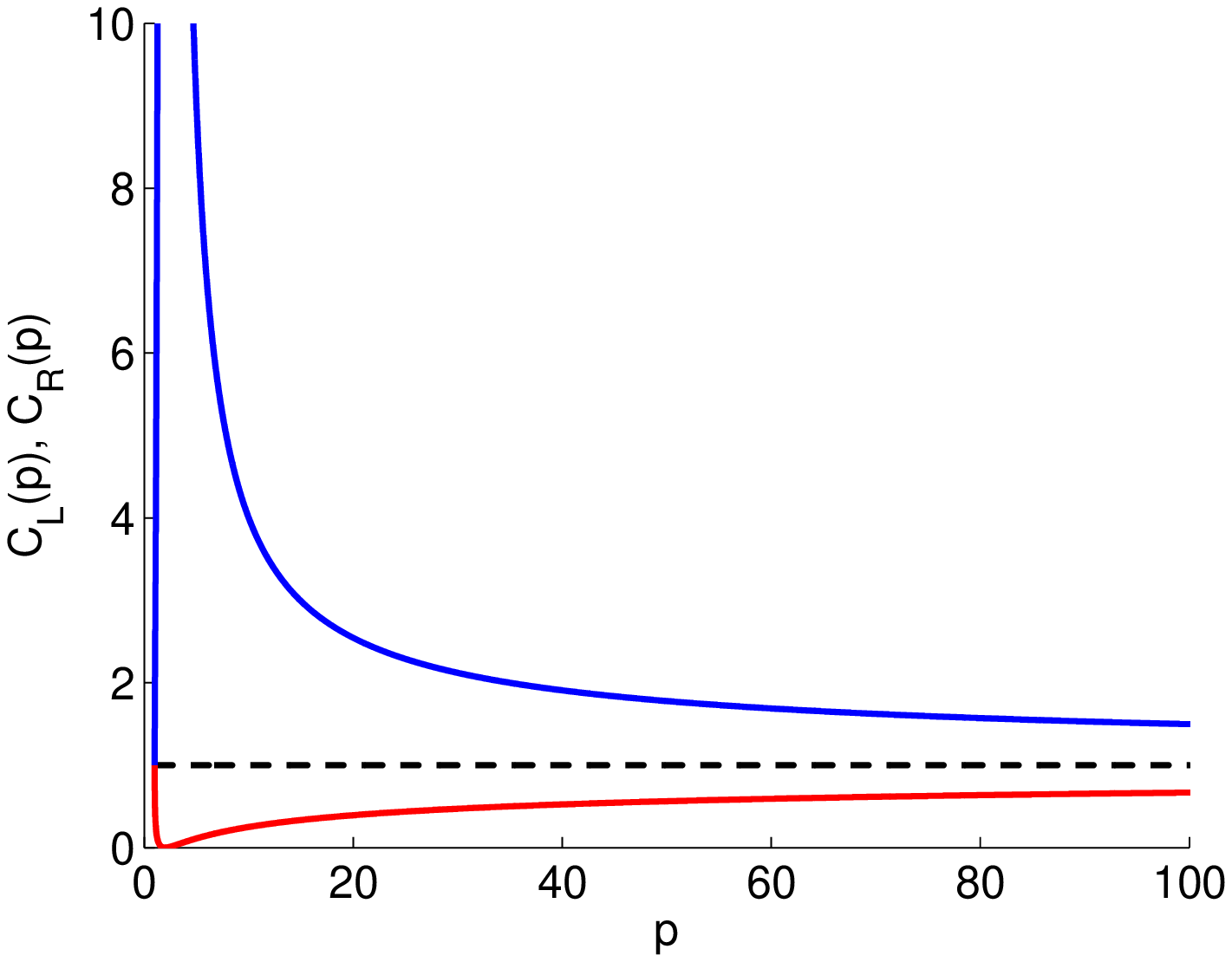} \label{fig:LpDissipativityHermitian} }
  \subfigure[]{\begin{psfrags}
                    \psfrag{ReLambda}[b][b]{\footnotesize$\Re\lambda$}
                    \psfrag{ImLambda}[b][b]{\footnotesize$\Im\lambda$}
                    \psfrag{p1to2}[b][b]{\footnotesize\textcolor{red}{$1<p<2$}}
                    \psfrag{p2toinfinity}[b][b]{\footnotesize\textcolor{blue}{$2<p<\infty$}}
                    \psfrag{phi}[b][b]{\footnotesize\hspace{0.2cm}$\left|\arg\lambda\right|=\cos^{-1}\left(\frac{\left|p-2\right|}{p}\right)$}
                    \psfrag{Sigmap}[b][b]{\footnotesize\textcolor{green}{$\Sigma_p$}}
                    \includegraphics[width=0.48\textwidth]{}
                    \end{psfrags}
                    \label{fig:SectorForDissipativity}}
  \caption{(a) $p$-dependent bounds $C_L$ (red) and $C_R$ (blue) for the spectral condition number of Hermitian positive definite matrices $A$, (b) conic section for the antieigenvalue assumption \eqref{cond:A4} for normal accretive matrices $A$}
  \label{fig:LpDissipativityHermitianMatrices}
\end{figure}

\subsection{\texorpdfstring{$\boldsymbol{\mu_1(A)}$}{mu1(A)} for normal accretive matrices}\label{sec:4.4}
If $A$ is a normal accretive matrix, then $\mu_1(A)$ from \eqref{equ:DCProp4} is given by $\mu_1(A) = \min(E\cup F)$, where
\begin{align*}
  E :=& \left\{\left.\frac{a_j}{\left|\lambda_j^A\right|}\right| 1\leqslant j\leqslant N\right\},\\
  F :=& \left\{\rule{0cm}{1.1cm}\left.\frac{2\sqrt{\left(a_j-a_i\right)\left(a_i\left|\lambda_j^A\right|^2-a_j\left|\lambda_i^A\right|^2\right)}}{\left|\lambda_j^A\right|^2-\left|\lambda_i^A\right|^2}\right|\right.
          0<\frac{a_j\left|\lambda_j^A\right|^2-2a_i\left|\lambda_j^A\right|^2+a_j\left|\lambda_i^A\right|^2}{\left(\left|\lambda_i^A\right|^2-\left|\lambda_j^A\right|^2\right)
          \left(a_i-a_j\right)}<1,\\
        & \qquad\qquad\qquad\qquad\qquad\qquad\qquad\qquad\;\;\left.\rule{0cm}{1.1cm}1\leqslant i,j\leqslant N,\,\left|\lambda_i^A\right|\neq\left|\lambda_j^A\right|\right\},
\end{align*}
and $\lambda_j^A=a_j+i b_j$ with $a_j,b_j\in\R$, $1\leqslant j\leqslant N$, denote the eigenvalues of $A$. In particular, if 
\begin{align}
  \label{equ:AntieigenvalueNormalMatrixPossibility1}
  \mu_1(A)=\frac{a_j}{\left|\lambda_j^A\right|}\text{ for some $1\leqslant j\leqslant N$,}
\end{align}
then $\mu_1(A)=\frac{\Re\left\langle w,Aw\right\rangle}{|Aw|}$ for an antieigenvector $w\in\K^N$ with $\left|w_j\right|=1$ and $\left|w_k\right|=0$ for $1\leqslant k\leqslant N$ 
with $k\neq j$. Conversely, if 
\begin{align}
  \label{equ:AntieigenvalueNormalMatrixPossibility2}
  \mu_1(A)=\frac{2\sqrt{\left(a_j-a_i\right)\left(a_i\left|\lambda_j^A\right|^2-a_j\left|\lambda_i^A\right|^2\right)}}{\left|\lambda_j^A\right|^2-\left|\lambda_i^A\right|^2}
  \text{ for some $1\leqslant i,j\leqslant N$ with $\left|\lambda_i^A\right|\neq\left|\lambda_j^A\right|$,}
\end{align}
then $\mu_1(A)=\frac{\Re\left\langle w,Aw\right\rangle}{|Aw|}$ for an antieigenvector $w\in\K^N$ with
\begin{align*}
  \left|w_i\right|^2 = \frac{a_j\left|\lambda_j^A\right|^2-2a_i\left|\lambda_j^A\right|^2+a_j\left|\lambda_i^A\right|^2}{\left(\left|\lambda_i^A\right|^2-\left|\lambda_j^A\right|^2\right)
        \left(a_i-a_j\right)},\quad
  \left|w_j\right|^2 = \frac{a_i\left|\lambda_i^A\right|^2-2a_j\left|\lambda_i^A\right|^2+a_i\left|\lambda_j^A\right|^2}{\left(\left|\lambda_i^A\right|^2-\left|\lambda_j^A\right|^2\right)
        \left(a_i-a_j\right)}
\end{align*}
and $\left|w_k\right|=0$ for $1\leqslant k\leqslant N$ with $k\neq i$ and $k\neq j$. This result can be found in \cite[Theorem 5.1]{GustafsonSeddighin1989}, 
\cite[Theorem 3.1]{GustafsonSeddighin1993}, \cite[Theorem 1.1]{GustafsonSeddighin2005} and \cite[Theorem 1]{Seddighin2005}. The proof in \cite[Theorem 5.1]{GustafsonSeddighin1989} 
is based on an application of the Lagrange multiplier method in order to solve a minimization problem.
Furthermore, in \cite{Davis1980} it was proved that the expression on the right hand side of \eqref{equ:AntieigenvalueNormalMatrixPossibility2} is an upper bound 
for $\mu_1(A)$. In \cite{Davis1980} one can also find a geometric interpretation of this equality by a semi-ellipse.
  
If $\mu_1(A)$ is given by \eqref{equ:AntieigenvalueNormalMatrixPossibility1} for some $1\leqslant j\leqslant N$, then the $L^p$-antieigenvalue condition 
\eqref{cond:A4} is equivalent to, compare \eqref{equ:DCProp8},
\begin{align}
  \label{equ:AntieigenvalueNormalMatrixPossibility3}
  \frac{|p-2|}{2\sqrt{p-1}}\left|\Im\lambda_j^A\right| < \Re\lambda_j^A,\;1<p<\infty.
\end{align}
This leads to a \begriff{cone condition} which postulates that every eigenvalues of $A$ is even contained in a $p$-dependent sector 
$\Sigma_p$ in the open right half-plane, called a \begriff{conic section},
\begin{align*}
  \Sigma_p :=&\left\{\lambda\in\C\mid\frac{\left|p-2\right|}{2\sqrt{p-1}}\left|\Im\lambda\right|< \Re\lambda\right\} \\
            =&\left\{\lambda\in\C\mid\left|\arg\lambda\right|<\cos^{-1}\left(\frac{\left|p-2\right|}{p}\right)\right\},\,1<p<\infty,
\end{align*}
see Figure \ref{fig:LpDissipativityHermitianMatrices}(b). The opening angle $\left|\arg\lambda\right|$ is close to $0$ for small and large $p$, i.e. $p$ close to $1$ or $\infty$, 
and it is $\frac{\pi}{2}$ for $p=2$. Indeed, this is the same requirement as in the scalar complex case, compare \eqref{equ:DCProp9} for $N=1$ and $b=p-2$. In particular, to satisfy 
the cone condition for arbitrary large $p$, the matrix $A$ must be of the form $A=\mathrm{diag}(a_1,\ldots,a_N)$ for some positive $a_1,\ldots,a_N\in\R$.

If $\mu_1(A)$ is given by \eqref{equ:AntieigenvalueNormalMatrixPossibility2} for some $1\leqslant i,j\leqslant N$ with $\left|\lambda_i^A\right|\neq\left|\lambda_j^A\right|$, 
then the $L^p$-antieigenvalue condition \eqref{cond:A4} is equivalent to
\begin{align}
  \label{equ:AntieigenvalueNormalMatrixPossibility4}
  \frac{2\sqrt{\left(a_j-a_i\right)\left(a_i\left|\lambda_j^A\right|^2-a_j\left|\lambda_i^A\right|^2\right)}}{\left|\lambda_j^A\right|^2-\left|\lambda_i^A\right|^2}>\frac{|p-2|}{p},
  \;1<p<\infty.
\end{align}
We emphasize the following equalities from \cite[Section 6]{GustafsonSeddighin1989} and \cite{Davis1980}
\begin{align*}
   &\frac{2\sqrt{\left(a_j-a_i\right)\left(a_i\left|\lambda_j^A\right|^2-a_j\left|\lambda_i^A\right|^2\right)}}{\left|\lambda_j^A\right|^2-\left|\lambda_i^A\right|^2} \\
  =&\frac{2\sqrt{\frac{|\lambda_j^A|}{|\lambda_i^A|}\left[\left(\frac{a_i}{|\lambda_i^A|}\right)\left(\frac{|\lambda_j^A|}{|\lambda_i^A|}\right)-\frac{a_j}{|\lambda_j^A|}\right]\left[\left(\frac{a_j}{|\lambda_j^A|}\right)\left(\frac{|\lambda_j^A|}{|\lambda_i^A|}\right)-\frac{a_i}{|\lambda_i^A|}\right]}}{\left(\frac{\left|\lambda_j^A\right|}{\left|\lambda_i^A\right|}\right)^2-1} \\
  =&\frac{2\sqrt{\left(r_i\rho_{ij}-r_j\right)\left(r_j\rho_{ij}-r_i\right)\rho_{ij}}}{\rho_{ij}^2-1},
\end{align*}
where $\rho_{ij}:=\frac{|\lambda_j^A|}{|\lambda_i^A|}$ and $r_k:=\Re\frac{\lambda_k^A}{|\lambda_k^A|}=\frac{a_k}{|\lambda_k^A|}$ for $k=i,j$. This relation is helpful 
to verify, that all pairs of eigenvalues $\lambda_j^A$ and $\lambda_i^A$ satisfying \eqref{equ:AntieigenvalueNormalMatrixPossibility4} (under the constraint from the definition of $F$) 
must belong to a semi-ellipse, \cite{Davis1980}. Moreover, note that in the scalar complex case with $A=\alpha\in\C$ we have $E=\left\{\frac{\Re\alpha}{|\alpha|}\right\}$, 
$F=\emptyset$, which implies $\frac{\Re\alpha}{|\alpha|}=\mu_1(\alpha)>\frac{|p-2|}{p}$. This is equivalent to \eqref{equ:DCProp8} and also to \eqref{equ:DCProp9}.

\subsection{\texorpdfstring{$\boldsymbol{\mu_1(A)}$}{mu1(A)} for arbitrary matrices}\label{sec:4.5}
If $A$ is an arbitrary matrix, there are only approximation results for $\mu_1(A)$. Such results are rather new in the literature and can be found in \cite[Theorem 2]{Seddighin2012}.
However, for an arbitrary given matrix $A$ it is also possible to compute the first antieigenvalue and its corresponding antieigenvector directly. The computation of antieigenvalues 
and antieigenvectors has been analyzed in \cite{Seddighin2005a,Seddighin2003}.


\def\cprime{$'$}

\end{document}